\documentclass[12pt,a4paper]{article}

\usepackage{geometry}
\usepackage{amsmath,amssymb,amsthm}
\usepackage{tikz-cd}
\usepackage{multicol}
\usepackage{mathtools}
\usepackage{authblk}
\usepackage{accents}
\usepackage{bm}
\usepackage{graphicx}

\usepackage[bookmarks=false, hyperfootnotes=false, colorlinks=true, linkcolor=blue, citecolor=blue, urlcolor=blue]{hyperref}

\theoremstyle{plain}
\newtheorem{theorem}{Theorem}[section]
\newtheorem{lemma}[theorem]{Lemma}

\newtheorem{corollary}[theorem]{Corollary}

\theoremstyle{definition}

\newtheorem{problem}[theorem]{Problem}

\renewcommand{\ge}{\geqslant}

\DeclareMathOperator{\End}{End}
\DeclareMathOperator{\Aut}{Aut}
\DeclareMathOperator{\im}{im}
\DeclareMathOperator{\Cay}{Cay}

\newcommand{\diC}{\accentset{\raisebox{-0.4ex}[0pt][0pt]{\scalebox{0.72}[0.72]{$\bm{\rightharpoonup}$}}}{C}}

\title{Automorphism groups of endomorphism monoids of $0$-unit $3$-valent circulant digraphs}
\author{Chenhui Lv}
\affil{\small School of Mathematical Sciences, University of Science and Technology of China, Hefei 230026, People's Republic of China}
\date{\today}

\begin{document}
\maketitle

\begingroup
\renewcommand{\thefootnote}{}
\footnotetext{Corresponding author: Chenhui Lv. E-mail address: lch1994@mail.ustc.edu.cn.}
\addtocounter{footnote}{-1}
\endgroup

\begin{abstract}
Let $G=\Cay(\mathbb{Z}_n,S)$ be a $3$-valent circulant digraph with
connection set $S=\{0,s,t\}$, where $s,t\in\mathbb{Z}_n^*$ and
$t\neq\pm s$.
Determining the automorphism group of its endomorphism monoid
reduces to determining the subgroup
$U_S(\mathbb{Z}_n)\leq\mathbb{Z}_n^*$ that normalizes $\End(G)$.
In this paper, we first establish necessary and sufficient conditions
for a unit circulant digraph without $2$-cycles to admit an
endomorphism whose image induces a directed cycle.
Using this characterization, we explicitly determine $U_S(\mathbb{Z}_n)$ for the previously unresolved $0$-unit $3$-valent family.
\end{abstract}

\noindent\textbf{Keywords:} circulant digraph; endomorphism monoid; automorphism group; core.

\medskip
\noindent\textbf{2020 Mathematics Subject Classification:} 05C20, 05C25, 20M20.

\section{Introduction}\label{sec:introduction}

Endomorphism monoids of various mathematical objects encode the self-maps of their structures, and determining the automorphism groups of the endomorphism monoids is a fundamental problem in semigroup theory; see~\cite{AK2007,AK2009a,AK2009b,BBL2007,BL2011,MS2003,MSZ2006,Z2017,Z2026}.

A \emph{digraph} (or \emph{directed graph}) $G = (V, A)$ consists of a non-empty set of vertices $V$ and a set of arcs $A \subseteq V \times V$.
Write $x\to y$ for an arc $(x,y)\in A$, and denote the endomorphism monoid (see Section~\ref{sec:preliminaries} for the definition) of $G$ by $\End(G)$.
In this paper, we study the endomorphism monoids of digraphs.
A fundamental result in this direction is due to Hedrl\'{\i}n and Pultr~\cite{HP1965}, who proved that every monoid of cardinality less than the first inaccessible cardinal can be realized as the endomorphism semigroup of an undirected graph.

The digraph $G$ is called \emph{reflexive} if $(x, x) \in A$ for all $x \in V$, and \emph{anti-reflexive} if $(x, x) \notin A$ for all $x \in V$.
In 1972, Va\v{z}enin~\cite{V1972} described the automorphisms of $\End(G)$ when $G$ is a reflexive digraph containing a non-loop arc that lies on no directed cycle of length at least $2$.
Note that the reflexivity of $G$ ensures that $\End(G)$ is a separated transformation semigroup, allowing us to apply Schreier's result~\cite{Schreier1937} to study $\Aut(\End(G))$.

For a positive integer $n$, let $\mathbb{Z}_n^*$ denote the group of units of $\mathbb{Z}_n$. We identify each $k\in\mathbb{Z}_n^*$ with the multiplier $x\mapsto kx$, and hence identify $\mathbb{Z}_n^*$ with $\Aut(\mathbb{Z}_n)$. For a subset $S\subseteq\mathbb{Z}_n$, the \emph{circulant digraph} of order $n$ with \emph{connection set} $S$ is the Cayley digraph $G=\Cay(\mathbb{Z}_n,S)$ with vertex set $\mathbb{Z}_n$ and arc set $\{(i,j)\in\mathbb{Z}_n\times\mathbb{Z}_n\mid j-i\in S\}$.
Every vertex of $G$ has in-valency and out-valency $|S|$.
The digraph $G$ is reflexive precisely when $0\in S$.
We call $G$ a \emph{unit circulant digraph} if $S\subseteq\mathbb{Z}_n^*$, and a \emph{$0$-unit circulant digraph} if $0\in S$ and $S\setminus\{0\}\subseteq\mathbb{Z}_n^*$.
For $s\in S$, an arc $i\to j$ of $\Cay(\mathbb{Z}_n,S)$ is said to have \emph{color} $s$ if $j-i=s$.

Ara\'ujo, Dobson, and Konieczny~\cite[Section~4]{ADK2010} and Ara\'ujo, Bentz, Dobson, Konieczny, and Morris~\cite[Section~6]{ABDKM2018} determined the automorphism groups of the endomorphism monoids of several classes of reflexive circulant digraphs, including digraphs with $2$-cycles and certain reflexive $3$-valent circulants.
Ara\'ujo, Dobson, and Konieczny~\cite{ADK2010} also determined the automorphism groups of the endomorphism monoids of two other classes of reflexive digraphs, namely, non-cyclic digraphs and permutational digraphs.
For $S \subseteq \mathbb{Z}_n$, define the group
\[
U_S(\mathbb{Z}_n) = \{k \in \mathbb{Z}_n^* \mid k \End(\Cay(\mathbb{Z}_n,S)) k^{-1} = \End(\Cay(\mathbb{Z}_n,S))\}.
\]
The following theorem was proved in~\cite{ABDKM2018}.

\begin{theorem}[{\cite[Theorem~6.4]{ABDKM2018}}]\label{thm:aut-of-end}
Let $G=\Cay(\mathbb{Z}_n,S)$ be a reflexive circulant digraph of order $n\geq2$, and let $p$ be the smallest prime divisor of $n$. If $n$ is square-free, or $|S|\leq p+1$, or $G$ is a $0$-unit circulant digraph, then
\begin{equation}\label{eq:aut-us}
\Aut(\End(G))=\Aut(G)\,U_S(\mathbb{Z}_n).
\end{equation}
\end{theorem}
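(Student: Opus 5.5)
This is a cited result (Theorem~6.4 of~\cite{ABDKM2018}). To reprove it, I would follow the standard route for transformation semigroups that contain all constant maps.

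\textbf{Step 1: reduce to conjugation.} Since $G$ is reflexive, every constant map $\mathbb{Z}_n\to\mathbb{Z}_n$ is an endomorphism. So $\End(G)$ is a transformation monoid containing all constants. By Schreier's theorem~\cite{Schreier1937}, every automorphism $\phi$ of $\End(G)$ is induced by conjugation by a unique permutation $\sigma$ of $\mathbb{Z}_n$, that is, $\phi(f)=\sigma f\sigma^{-1}$. The argument is as follows:
\begin{itemize}
\item the constants are exactly the left zeros of $\End(G)$, so $\phi$ permutes them;
\item this yields a bijection $\sigma$ of $\mathbb{Z}_n$;
\item the identity $f\circ c_x=c_{f(x)}$ then forces $\phi(f)=\sigma f\sigma^{-1}$.
\end{itemize}
Consequently $\Aut(\End(G))$ is isomorphic to the normalizer $N=\{\sigma\in\mathrm{Sym}(\mathbb{Z}_n)\mid \sigma\End(G)\sigma^{-1}=\End(G)\}$. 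The inclusion $\Aut(G)U_S(\mathbb{Z}_n)\subseteq N$ is immediate. The real content is the reverse inclusion: every $\sigma\in N$ factors as an automorphism of $G$ times a multiplier $k\in U_S(\mathbb{Z}_n)$.

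\textbf{Step 2: $\sigma$ preserves a canonical structure.} Because $\sigma$ normalizes $\End(G)$, it preserves every set defined from $\End(G)$ in semigroup-theoretic terms. In particular it preserves:
\begin{itemize}
\item the idempotents;
\item the images of idempotents of rank $2$;
\item the kernels of endomorphisms.
\end{itemize}
From these one recovers the arc relation $x\to y$ up to a canonical symmetrization or reversal. For $x\neq y$, the map collapsing everything onto $\{x,y\}$ in a suitable way is an endomorphism exactly when the arc structure allows it. So $\sigma$ is an automorphism of some digraph $G'$ canonically attached to $\End(G)$, for example the digraph recording which pairs lie in the image of a rank-$2$ endomorphism. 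Next, I would show that $\sigma$ normalizes the regular translation group $\mathbb{Z}_n\le\Aut(G)$ up to an automorphism of $G$. That is, after multiplying $\sigma$ by an element of $\Aut(G)$, it normalizes $\langle x\mapsto x+1\rangle$ and is therefore affine, $x\mapsto kx+b$. Composing with the translation by $-b$ leaves a multiplier $k$, and $k\in U_S(\mathbb{Z}_n)$ by definition of $N$.

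\textbf{Step 3: the main obstacle.} The hard step is the one just described: putting $\sigma$ into the normalizer of the regular cyclic subgroup. This is where the three hypotheses enter.
\begin{itemize}
\item If $n$ is square-free, I would use the known classification of circulant automorphism groups (Dobson--Morris type results). Any two regular cyclic subgroups of $\Aut(G')$ are then conjugate within $\Aut(G')$, which makes the group CI-like. Hence $\sigma\mathbb{Z}_n\sigma^{-1}$ is conjugate to $\mathbb{Z}_n$ inside $\Aut(G)$.
\item If $|S|\le p+1$, the out-valency is small relative to every block size. An orbit/block argument should then show that $\Aut(G)$ acts either as a $2$-closed group with a normal regular cyclic subgroup or as a wreath product, and both cases can be handled directly.
\item In the $0$-unit case every non-loop arc has a unit color. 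So each rank-$2$ image $\{x,y\}$ with $y-x\in\mathbb{Z}_n^*$ generates, by translation, a copy of a Hamiltonian cycle structure. I would use this to show that $\sigma$ maps color classes to color classes, which forces $\sigma$ to be affine after a correction by $\Aut(G)$.
\end{itemize}
I expect the square-free case, with its reliance on the external classification, to be the most delicate part to make self-contained.
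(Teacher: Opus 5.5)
The paper gives no proof of this statement; it quotes it from \cite{ABDKM2018}, so your outline has to stand on its own. Your Step~1 is fine: all constants lie in $\End(G)$, so $\Aut(\End(G))$ is identified with the normalizer $N$ of $\End(G)$ in $\mathrm{Sym}(\mathbb{Z}_n)$. The final affine step is also fine.

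\textbf{The missing bridge.} In Step~3 you conclude that $\sigma\langle\rho\rangle\sigma^{-1}$, where $\rho\colon x\mapsto x+1$, is conjugate to $\langle\rho\rangle$ inside $\Aut(G)$. Any such conjugacy result concerns regular cyclic subgroups \emph{of} $\Aut(G)$, so you first need $\sigma\langle\rho\rangle\sigma^{-1}\le\Aut(G)$. You never prove this. The missing idea is one line: $\Aut(G)$ is the group of units of $\End(G)$, and conjugation by $\sigma$ is a monoid automorphism of $\End(G)$, so $\sigma$ normalizes $\Aut(G)$.

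\textbf{Step~2 does not supply it.} It is false that $\End(G)$ determines the arc relation up to symmetrization or reversal. For $n=7$ and $S=\{0,1,2\}$, Theorem~\ref{thm:grand-classification}(b) gives $U_S(\mathbb{Z}_7)=\mathbb{Z}_7^*$, so $x\mapsto 3x$ lies in $N$. Yet $3\{\pm1,\pm2\}=\{\pm1,\pm3\}$, so this map preserves neither $G$, nor $G^{\mathrm{op}}$, nor the underlying graph. Your rank-$2$ digraph is empty throughout the family of Problem~\ref{prob:problem-3}. By Lemma~\ref{lem:constant-endomorphism}, non-constant endomorphisms avoid loops, and $\Cay(\mathbb{Z}_n,\{s,t\})$ has no $2$-cycles, so every non-constant image has at least $3$ vertices. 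Even if $\sigma\in\Aut(G')$ for some translation-invariant $G'$, conjugating inside $\Aut(G')$ would only give $\sigma\in\Aut(G')\,\mathrm{AGL}(1,n)$, not $\sigma\in\Aut(G)\,U_S(\mathbb{Z}_n)$.

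\textbf{What the hypotheses must supply.} With the unit-group observation, the proof is a Frattini argument. If $\sigma\langle\rho\rangle\sigma^{-1}=\alpha\langle\rho\rangle\alpha^{-1}$ with $\alpha\in\Aut(G)$, then $\alpha^{-1}\sigma$ normalizes $\langle\rho\rangle$, hence is affine, and its multiplier lies in $U_S(\mathbb{Z}_n)$. Conversely, every element of $\Aut(G)\,U_S(\mathbb{Z}_n)$ has this property, so the theorem is equivalent to this conjugacy for all $\sigma\in N$. It suffices, for instance, that all regular cyclic subgroups of $\Aut(G)$ be conjugate in $\Aut(G)$.
\begin{itemize}
\item For square-free $n$, this follows from Muzychuk's theorem that \'Ad\'am's conjecture holds in square-free order, combined with Babai's lemma.
\item For $|S|\le p+1$ and for $0$-unit digraphs, it requires the structural description of $\Aut(G)$ for circulants with small or unit connection sets, which is the substance of \cite{ABDKM2018}.
\end{itemize}
Your bullets for those two cases state intentions, not arguments. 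The proposed $0$-unit mechanism again relies on rank-$2$ images that need not exist. Moreover, $\sigma$ need not map color classes to color classes. In $\Cay(\mathbb{Z}_9,\{0,1,4,7\})$, the transposition $(0\;3)$ is an automorphism, hence lies in $N$. It sends the colour-$1$ arc $0\to1$ to the colour-$7$ arc $3\to1$, while fixing the colour-$1$ arc $1\to2$.

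So the skeleton is right, but the statement is left unproved exactly where it is nontrivial.
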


As a special case of Theorem~\ref{thm:aut-of-end}, for all reflexive $3$-valent circulants $G=\Cay(\mathbb{Z}_n,S)$, the isomorphism~\eqref{eq:aut-us} holds. Thus, to determine $\Aut(\End(G))$ for such circulants $G$, it suffices to determine $U_S(\mathbb{Z}_n)$.
In Section~6.2, Ara\'ujo et al.~\cite{ABDKM2018} considered the following $0$-unit subfamily, which forms part of their unresolved Problem~3:

\begin{problem}\label{prob:problem-3}
Let $n \geq 3$ be an integer, and let $S=\{0,s,t\}$ with
$s,t\in\mathbb{Z}_n^*$ and $t\neq \pm s$.
Determine $U_S(\mathbb{Z}_n)$.
\end{problem}

The following theorem determines $U_S(\mathbb{Z}_n)$, and hence $\Aut(\End(G))$ by~\eqref{eq:aut-us}, for this family.

\begin{theorem}\label{thm:grand-classification}
Let $n \geq 3$ be an integer, let $S=\{0,s,t\}$ with $s,t\in\mathbb{Z}_n^*$ and $t\neq\pm s$, and set $d=\gcd(n,s-t)$.
The following statements hold:

\begin{enumerate}
\item[\textup{(a)}] If $2d<n$ and the equation $xs+(d-x)t\equiv0\pmod n$ has an integer solution $1\leq x\leq d-1$, then
\[
U_S(\mathbb{Z}_n)=
\begin{cases}
\{\pm1,\pm ts^{-1}\}, & t^2\equiv s^2\pmod n,\\[1mm]
\{\pm1\}, & t^2\not\equiv s^2\pmod n.
\end{cases}
\]
\item[\textup{(b)}] Otherwise, $U_S(\mathbb{Z}_n)=\mathbb{Z}_n^*$.
\end{enumerate}
\end{theorem}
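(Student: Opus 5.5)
Throughout, write $G=\Cay(\mathbb{Z}_n,S)$. For $k\in\mathbb{Z}_n^*$, conjugation by the multiplier $k$ sends $\End(\Cay(\mathbb{Z}_n,S))$ onto $\End(\Cay(\mathbb{Z}_n,kS))$. Hence $k\in U_S(\mathbb{Z}_n)$ exactly when $\End(G)=\End(\Cay(\mathbb{Z}_n,kS))$. Two easy inclusions frame the problem. First, $\pm1\in U_S(\mathbb{Z}_n)$ always: the inversion $x\mapsto -x$ maps $G$ to its reverse digraph, and a map is an endomorphism of a digraph if and only if it is one of the reverse. Second, if $t^2\equiv s^2$, then $k=ts^{-1}$ satisfies $kS=\{0,t,s\}=S$, so $k\in\Aut(G)$ normalizes $\End(G)$. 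So the real content is two claims. In case (b) every unit $k$ works. In case (a) a unit $k$ works only if $kS=\pm S$, and $kS=\pm S$ amounts to $k\in\{\pm1,\pm ts^{-1}\}$, where $\pm ts^{-1}$ is allowed only when $t^2\equiv s^2$.

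The key tool will be the characterization announced in the abstract. A unit circulant digraph $\Cay(\mathbb{Z}_n,\{s,t\})$ without $2$-cycles admits an endomorphism whose image induces a directed cycle $\diC_m$ exactly under conditions on $m$ and on sums $xs+yt$. I would first prove that lemma, in the following form. A homomorphism onto $\diC_m$ is a map $f:\mathbb{Z}_n\to\mathbb{Z}_m$ sending each arc of color $s$ or $t$ to $+1$. Any closed walk using $x$ arcs of color $s$ and $y$ of color $t$ must therefore satisfy $m\mid x+y$. The directed cycles of the reflexive graph $G$ become available as images once loops are allowed, which is why the proper endomorphisms relevant here are exactly those collapsing onto a cycle. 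Since $s-t$ generates the subgroup $d\mathbb{Z}_n$, consider the circuits of length $d$ given by the condition $xs+(d-x)t\equiv 0$. With this I expect the lemma to say the following. $G$ has a non-automorphism endomorphism with image a directed cycle that is not collapsible, and this cycle has length $d$, if and only if such an $x$ with $1\le x\le d-1$ exists. The condition $2d<n$ ensures the induced image subgraph is a genuine cycle without chords.

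For (b), I would show that when the hypothesis of (a) fails, $\End(G)$ is determined by data invariant under every multiplier. The natural candidates are the partition of $\mathbb{Z}_n$ into cosets of $d\mathbb{Z}_n$, together with the fact that every endomorphism is either an automorphism or factors through a reflexive ``path-like'' quotient. Concretely, I would aim to show that $\End(G)$ equals the set of maps $f$ with $f(i+s)-f(i)$ and $f(i+t)-f(i)$ lying in $\{0,s,t\}$. I would then check that this description is stable under $k$ by comparing it with the analogous description for $kS$, using that $\gcd(n,ks-kt)=d$ is unchanged.

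For (a), the existence of the $d$-cycle endomorphism $f$ with image $C$ is used as a rigid witness. If $k\in U_S(\mathbb{Z}_n)$, then $kfk^{-1}\in\End(G)$, and its image $kC$ must induce a directed cycle in $G$ whose arcs have colors in $S\setminus\{0\}$. Because the arcs of $C$ have colors $s$ and $t$ in the proportion $x:(d-x)$, with both colors occurring, the multiplier must map $\{s,t\}$ into $\{s,t\}$ up to a common sign. That forces $k\in\{\pm1,\pm ts^{-1}\}$, and also forces $t^2\equiv s^2$ whenever $k=\pm ts^{-1}$. I expect the main obstacle to be proving the lemma precisely, especially ruling out the possibility that $kC$ is a cycle using colors in some unexpected way: I must show that the images of endomorphisms are induced subgraphs, which is where $2d<n$ should enter. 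I would try first to handle this by counting, within a coset of $d\mathbb{Z}_n$, how many arcs of each color a cycle of length $d$ can use.
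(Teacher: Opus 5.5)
Your framing is fine: $k\in U_S(\mathbb{Z}_n)$ iff $\End(G)=\End(\Cay(\mathbb{Z}_n,kS))$, and $\pm1$, together with $\pm ts^{-1}$ when $t^2\equiv s^2$, lie in $U_S(\mathbb{Z}_n)$. But both substantive halves have genuine gaps.

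\textbf{Part (b).} The description you aim for, namely maps with $f(i+s)-f(i),\,f(i+t)-f(i)\in\{0,s,t\}$, is just the definition of $\End(G)$. ``Comparing it with the analogous description for $kS$'' is the whole question. Invariance of $\gcd(n,ks-kt)=d$ cannot decide it, since $d$ is equally invariant in case (a), where $U_S(\mathbb{Z}_n)$ is small. Part (b) rests on two different mechanisms, and neither is in your plan.

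If $xs+(d-x)t\equiv0\pmod n$ has no solution with $1\le x\le d-1$, you must show every non-constant endomorphism is an automorphism. This needs three facts:
\begin{enumerate}
\item non-constant endomorphisms of $G$ never use loops (Lemma~\ref{lem:constant-endomorphism}), so they are endomorphisms of $G'=\Cay(\mathbb{Z}_n,\{s,t\})$;
\item the core of $G'$ is vertex-transitive and strongly connected, hence has out-valency $1$ and is some $\diC_d$;
\item the cycle characterization.
\end{enumerate}
You then still need that $\Aut(G)$ is normalized by every multiplier. This is a nontrivial fact about this family; the paper cites Proposition~6.3 of Ara\'ujo et al.

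If a solution exists but $2d=n$, non-trivial endomorphisms do exist, and every unit works for a different reason. Here $s-t\equiv n/2$ and $k$ is odd, so $kS'=\{kt,kt+n/2\}$ is exactly the set of displacements of walks of length $k$ in $G'$. Every endomorphism maps such walks to such walks. This gives $\End(G')\subseteq\End(\Cay(\mathbb{Z}_n,kS'))$, with equality by cardinality.

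\textbf{Part (a) and the role of $2d<n$.} Your claim that $2d<n$ is what makes the image an induced, chordless cycle is false, and your argument for (a) proves too much. Take $n=16$, $S=\{0,1,9\}$, so $d=8=n/2$ and $x=1$ is a solution. The cycle endomorphism retracts $G'$ onto the induced cycle $0\to1\to10\to3\to12\to5\to14\to7\to0$. Multiplying by $k=3$ gives the induced cycle $0\to9\to10\to3\to4\to5\to14\to15\to0$, again chordless and using both colors. Indeed $3\in U_S(\mathbb{Z}_{16})$, although $3S\neq\pm S$. So ``$kC$ is a directed cycle using both colors in some proportion'' does not force $k\{s,t\}=\pm\{s,t\}$.

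A correct argument must use $2(s-t)\not\equiv0\pmod n$ explicitly, and it must use many cycle endomorphisms rather than one witness. Fix a solution $x$ and any ordering of $x$ arcs of color $s$ and $d-x$ of color $t$; this is a closed walk $0=w_0,\dots,w_d=0$. The map $f_W(y)=w_{ys^{-1}\bmod d}$ is an endomorphism fixing $0$, and it sends both $ks$ and $kt$ to $w_c$, where $c\equiv k\pmod d$. So $f_W\in\End(\Cay(\mathbb{Z}_n,kS'))$ forces $w_c=ct+m(s-t)\in kS'$, where $m$ is the number of $s$-arcs among the first $c$ steps.
\begin{itemize}
\item The cases $c=1$ and $c=d-1$ give $kS=S$ and $kS=-S$ respectively.
\item For $2\le c\le d-2$, suppose some solution lies in $[2,d-2]$. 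Varying the ordering realizes three consecutive values of $m$, giving three elements of $kS'$. These are pairwise distinct precisely because $2(s-t)\not\equiv0$, contradicting $|kS'|=2$. This is where $2d<n$ actually enters.
\item If the only solutions are $x\in\{1,d-1\}$, assume $x=1$ solves. Then $1+n/d$ is also a solution, so $n/d\ge d-2$. A direct comparison of $\{ct,\,s+(c-1)t\}$ with $kS'$ then rules out $2\le c\le d-2$.
\end{itemize}
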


The proof of Theorem~\ref{thm:grand-classification} requires determining when the corresponding anti-reflexive digraph $G'=\Cay(\mathbb{Z}_n,S\setminus\{0\})$ admits an endomorphism whose image induces a directed cycle.
With this motivation, we establish in Section~\ref{sec:cycle-endomorphisms} an exact characterization (Theorem~\ref{thm:cycle-image-characterization}) of when a unit circulant digraph without $2$-cycles admits an endomorphism onto a directed cycle, which is also of independent interest.
Based on this, in Section~\ref{sec:proof-of-main}, we explicitly
determine the group $U_S(\mathbb{Z}_n)$, thereby proving
Theorem~\ref{thm:grand-classification} and completing the
determination of $U_S(\mathbb{Z}_n)$ for the unresolved $0$-unit
family described above.

\section{Preliminaries}\label{sec:preliminaries}

Let $G = (V, A)$ and $H = (V', A')$ be digraphs.
For a non-empty subset $W \subseteq V$, let $G[W]$ denote the \emph{sub-digraph induced by $W$}. A \emph{homomorphism} $f: G \to H$ is a map from $V$ to $V'$ that sends arcs to arcs (non-arcs may be sent to arcs as well).
An \emph{endomorphism} of $G$ is a homomorphism from $G$ to itself.
The set of all endomorphisms of $G$, denoted by $\End(G)$, forms a monoid under composition. An endomorphism $f \in \End(G)$ is called \emph{non-trivial} if it is neither an automorphism of $G$ nor a constant map.

Let $H$ be a sub-digraph of a digraph $G$. A \emph{retraction} from $G$ to $H$ is a homomorphism $r : G \to H$ such that $r(x) = x$ for all $x \in V(H)$. If there is a retraction from $G$ to $H$, we say that $G$ \emph{retracts} to $H$ and that $H$ is a \emph{retract} of $G$.
A \emph{core} is a finite digraph that does not retract to any proper sub-digraph.

\begin{lemma}[{\cite[Proposition~1.31]{HN2004}}]\label{lem:core-proper-subdigraph}
A finite digraph $G$ is a core if and only if there is no homomorphism from $G$ to a proper sub-digraph.
\end{lemma}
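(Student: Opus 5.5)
The plan is to prove the two implications separately. The forward direction is immediate from the definitions. For the converse, I will turn an arbitrary homomorphism into a proper sub-digraph into an idempotent endomorphism, and then read off a retraction from that idempotent.

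\emph{If $G$ is not a core.} Then $G$ retracts to some proper sub-digraph $H$. The retraction $r\colon G\to H$ is itself a homomorphism from $G$ to a proper sub-digraph. By contraposition, if no such homomorphism exists then $G$ is a core.

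\emph{Conversely, suppose $f\colon G\to H$ is a homomorphism onto a proper sub-digraph $H=(V',A')$ of $G=(V,A)$.} I will show that $G$ retracts to a proper sub-digraph.

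First, compose $f$ with the inclusion $H\hookrightarrow G$ and regard $f$ as an element of $\End(G)$. I claim $f$ is not injective on $V$. Suppose it were. Then $f$ is a bijection of the finite set $V$, so $V'=V$. Hence $H$ being proper forces $A'\subsetneq A$. But $f$ sends arcs to arcs, and injectivity on vertices makes the induced map $A\to A'$, $(x,y)\mapsto(f(x),f(y))$, injective. This gives $|A|\le|A'|<|A|$, a contradiction. Therefore $|f(V)|<|V|$.

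Next, since $\End(G)$ is a finite monoid, some power $e=f^m$ with $m\ge1$ is idempotent; one can take $m$ to be a suitable multiple of the index and period of $f$. Then $e(V)\subseteq f(V)$, so $|e(V)|<|V|$. Let $K=G[e(V)]$. Because $e$ is an endomorphism, every arc $(x,y)$ of $G$ maps to an arc $(e(x),e(y))$ of $G$ with both ends in $e(V)$, that is, to an arc of $K$. Thus $e\colon G\to K$ is a homomorphism. Idempotency gives $e(e(x))=e(x)$, so $e$ fixes $e(V)$ pointwise. Hence $e$ is a retraction of $G$ onto the proper sub-digraph $K$, and $G$ is not a core.

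The only point needing care is the case where $H$ has the full vertex set but fewer arcs. The arc-counting argument in the second step handles this case, and everything else is routine finite-semigroup reasoning.
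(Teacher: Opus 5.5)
Your proof is correct. The arc-counting step properly handles a spanning proper sub-digraph, and the idempotent power $e=f^m$ is indeed a retraction of $G$ onto the proper induced sub-digraph $G[e(V)]$. The paper gives no proof of its own here; it simply cites Hell and Ne\v{s}et\v{r}il, and your argument is the standard one behind that result.
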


Every finite digraph $G$ has, up to isomorphism, a unique retract that is a core. If $H$ is such a retract, then $H$ is called the \emph{core} of $G$. See~\cite[Section~1.6]{HN2004} for further details.
The corresponding result for graphs was proved by Welzl~\cite{W1984} and independently by MacGillivray; see also Hahn and Tardif~\cite[Theorem~3.7]{HT1997}. The same proof applies verbatim to finite digraphs.

\begin{theorem}\label{thm:core-vertex-transitive}
The core of every finite vertex-transitive digraph is vertex-transitive.
\end{theorem}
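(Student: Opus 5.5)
The statement to prove is Theorem~\ref{thm:core-vertex-transitive}: the core of a finite vertex-transitive digraph is vertex-transitive. I would follow the Welzl--MacGillivray argument for graphs. It uses only finiteness and the fact that homomorphisms and automorphisms compose, so it carries over to digraphs with no change.

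First I record two basic facts. Let $G$ be finite and vertex-transitive, and let $H$ be a retract of $G$ that is a core, with retraction $r\colon G\to H$. For any $f\in\End(G)$, the image $f(G)$ means the sub-digraph with vertex set $f(V)$ and the arcs $f(x)\to f(y)$ for arcs $x\to y$ of $G$.

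\emph{Step 1.} Every homomorphism $g\colon H\to G$ is injective, and $g(H)$ is a copy of $H$. To see this, note that $r\circ g$ is an endomorphism of $H$. Since $H$ is a core, Lemma~\ref{lem:core-proper-subdigraph} shows that $r\circ g$ is surjective, hence bijective because $H$ is finite. A bijective endomorphism of a finite digraph is an automorphism, since it maps arcs injectively into a finite arc set of the same size. So $r\circ g$ is an automorphism, and therefore $g$ is injective. Moreover, $g$ maps arcs to arcs injectively, and $r$ maps the arcs of $g(H)$ back onto arcs of $H$ bijectively. Hence $g(H)$ is isomorphic to $H$ via $g$.

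\emph{Step 2 (transitivity).} Take vertices $u,v\in V(H)$. Choose $\sigma\in\Aut(G)$ with $\sigma(u)=v$, and consider $\phi=r\circ\sigma|_H\colon H\to H$. This is an endomorphism of the core $H$, so by Step~1 it is an automorphism of $H$. Since $v\in V(H)$, we have $r(v)=v$, and so $\phi(u)=r(\sigma(u))=r(v)=v$. Thus $\Aut(H)$ acts transitively on $V(H)$.

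The only point needing care is Step~1. There one must check that a bijective endomorphism of a finite digraph is an automorphism, meaning that its inverse also preserves arcs. This follows by counting: the map sends the arc set injectively into itself, and the arc set is finite, so the map is a bijection on arcs as well. With that in hand, the argument is exactly the graph proof, and it applies verbatim to digraphs.
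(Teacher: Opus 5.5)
Your proof is correct and is the standard Welzl--MacGillivray argument (as in Hahn--Tardif, Theorem~3.7). The paper does not reproduce this argument; it cites it and remarks that it carries over verbatim to digraphs, which your version confirms: restrict an automorphism of $G$ sending $u$ to $v$ to the core, compose with the retraction, and use that every endomorphism of a finite core is an automorphism. The second half of your Step~1 (that $g(H)\cong H$) is not needed for Step~2, though it does no harm.
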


For a positive integer $\ell$, we write $\diC_\ell$ for a directed cycle of length $\ell$; in particular, $\diC_1$ is a loop, and $\diC_2$ is the directed $2$-cycle.
A digraph $G$ is called \emph{strongly connected} if, for every pair of distinct vertices $x, y \in V(G)$, there exists a directed walk from $x$ to $y$.

For a subset $S\subseteq\mathbb{Z}_n$ and an element $k\in\mathbb{Z}_n$, write
\[
-S=\{-s\mid s\in S\}\ \text{ and }\ kS=\{ks\mid s\in S\}.
\]
With this notation, conjugation by $k\in\mathbb{Z}_n^*$ induces an isomorphism
\[
\End(\Cay(\mathbb{Z}_n,S))
\to
\End(\Cay(\mathbb{Z}_n,kS)),
\ \
f\mapsto kfk^{-1}.
\]
Thus, the units $k \in \mathbb{Z}_n^*$ such that
$k\End(\Cay(\mathbb{Z}_n, S))k^{-1} = \End(\Cay(\mathbb{Z}_n, S))$ are precisely those satisfying
$\End(\Cay(\mathbb{Z}_n, S)) = \End(\Cay(\mathbb{Z}_n, kS))$. In other words,
\begin{equation}\label{eq:key-expression}
U_S(\mathbb{Z}_n)
= \{k \in \mathbb{Z}_n^* \mid \End(\Cay(\mathbb{Z}_n, S)) = \End(\Cay(\mathbb{Z}_n, kS))\}.
\end{equation}

\section{Non-trivial endomorphisms to directed cycles}\label{sec:cycle-endomorphisms}

In this section, we establish necessary and sufficient conditions characterizing when a $0$-unit $3$-valent circulant digraph admits a non-trivial endomorphism.

Consider first an anti-reflexive circulant digraph $G=\Cay(\mathbb{Z}_n,\{s,t\})$, where $s,t\in\mathbb{Z}_n^*$ and $t\neq\pm s$.
Suppose that $G$ admits a non-trivial endomorphism. Then Lemma~\ref{lem:core-proper-subdigraph} implies that $G$ is not a core, and hence the core $C$ of $G$ is a proper sub-digraph of $G$.
Since $s \in \mathbb{Z}_n^*$, the arcs of color $s$ form a directed Hamiltonian cycle of $G$. Therefore, $G$ is strongly connected, and so is its homomorphic image $C$. By Theorem~\ref{thm:core-vertex-transitive}, the digraph $C$ is vertex-transitive. Since $G$ has arcs and $C$ is anti-reflexive, $C$ cannot have only one vertex. Every vertex of $C$ has the same out-valency, which is either $1$ or $2$. If this out-valency were $2$, then no arc of $G$ would go from $V(C)$ to $V(G) \setminus V(C)$, and the strong connectivity of $G$ would force $C=G$, a contradiction. Thus $C$ has out-valency $1$ and is therefore isomorphic to $\diC_d$ for some $d$. Since $G$ contains no $2$-cycles, we have $d\geq3$.

The above consideration shows that homomorphisms onto directed cycles play a central role in determining whether such circulant digraphs admit non-trivial endomorphisms. We therefore address the following more general question: under what conditions does a unit circulant digraph without $2$-cycles admit an endomorphism whose image induces a directed cycle? The next theorem provides a complete characterization.

\begin{theorem}\label{thm:cycle-image-characterization}
Let $G=\Cay(\mathbb{Z}_n,S)$, where $\varnothing\neq S\subseteq\mathbb{Z}_n^*$ and $S\cap(-S)=\varnothing$, and let $d\geq3$ be an integer. Then there exists an endomorphism $f\in\End(G)$ such that the induced sub-digraph $G[\im(f)]$ is the directed cycle $\diC_d$ if and only if the following conditions hold:
\begin{enumerate}
\item[\textup{(a)}] $d\mid n$, and there exists an integer $c$ such that $s\equiv c\pmod d$ for every $s\in S$.
\item[\textup{(b)}] There exist $s_1,\dots,s_d\in S$, not necessarily distinct, such that $\sum_{i=1}^d s_i\equiv0\pmod n$.
\end{enumerate}
Moreover, under these conditions, the core of $G$ is $\diC_d$, and the length of every directed cycle in $G$ (whether induced or not) is divisible by $d$.
\end{theorem}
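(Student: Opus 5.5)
Both directions rest on the \emph{level} map $\lambda:\mathbb{Z}_n\to\mathbb{Z}_d$, $\lambda(x)=c^{-1}x \bmod d$, which is defined once $d\mid n$ and $c$ is a unit mod $d$. Every arc $x\to x+s$ raises $\lambda$ by exactly $1$ when all $s\equiv c\pmod d$. So I would first show that an endomorphism onto an induced $\diC_d$ forces this level structure, and then build such an endomorphism from (a) and (b).

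\emph{Necessity.} Let $f\in\End(G)$ with $G[\im f]\cong\diC_d$, and label the image cycle $v_0\to v_1\to\dots\to v_{d-1}\to v_0$ with indices in $\mathbb{Z}_d$. Define $\phi(x)=i$ when $f(x)=v_i$. Each arc of $G$ maps to an arc of the induced cycle, so $\phi(x+s)=\phi(x)+1$ for every $s\in S$ and every $x$.

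For $s,t\in S$ this gives $\phi(x+s-t)=\phi(x)$, so $\phi$ is invariant under translation by the subgroup $H=\langle S-S\rangle$. Fix $s\in S$. Since $s$ is a unit, iterating $\phi(x+s)=\phi(x)+1$ gives $\phi(ks)=\phi(0)+k$. As $\phi$ is $H$-periodic, it factors through $\mathbb{Z}_n/H$, which is cyclic of order $m=[\mathbb{Z}_n:H]$. The images $\phi(ks)$ must cover all $d$ values because $f$ is onto the cycle. Also $ks\in H$ forces $\phi(ks)=\phi(0)$, i.e.\ $k\equiv 0\pmod d$.

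Taking $k=n$ gives $d\mid n$. Since $ms\in H$, we get $d\mid m$, so $H\subseteq d\mathbb{Z}_n$. Hence every $s-t\in d\mathbb{Z}_n$, which gives (a) with $c\equiv s$; this $c$ is a unit mod $d$ because $s$ is a unit mod $n$.

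For (b), start at $v_0=f(x_0)$ for some $x_0\in\im f$, and use $f(v_i)\to f(v_{i+1})$. Since $f$ maps the cycle into the cycle preserving $\phi$-increments, it acts on $v_0,\dots,v_{d-1}$ as a rotation. The arcs $v_i\to v_{i+1}$ are arcs of $G$, so $v_{i+1}-v_i=s_{i+1}\in S$. Summing around the cycle gives $\sum s_i\equiv0\pmod n$. This already holds for any directed $d$-cycle in $G$, and $\im f$ is one.

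\emph{Sufficiency.} Given (a) and (b), the map $\lambda$ above is well defined. Put $v_0=0$ and $v_i=s_1+\dots+s_i$; by (b) this closes into a directed closed walk of length $d$. Its vertices have distinct $\lambda$-values $0,1,\dots,d-1$, so they are distinct and form a $\diC_d$. Define $f(x)=v_{\lambda(x)}$. Any arc $x\to x+s$ has $\lambda(x+s)=\lambda(x)+1$, so $f$ sends it to $v_i\to v_{i+1}$, an arc of $G$; hence $f$ is a homomorphism.

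The step I expect to be the real obstacle is showing that the image is \emph{induced}. Any arc $v_i\to v_j$ of $G$ forces $j=i+1$ by levels, so no extra arcs exist: chords would need $\lambda$ to jump by more than $1$, and $d\ge3$ together with the absence of $2$-cycles rules out backward arcs. So $G[\im f]=\diC_d$.

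\emph{Moreover.} The core of $G$ is $\diC_d$: $G$ retracts onto it via $f$, since $f$ fixes each $v_i$ because $\lambda(v_i)=i$. A directed cycle is a core, being anti-reflexive with $d\ge3$. Any directed cycle in $G$ of length $\ell$ increases $\lambda$ by $\ell$ and returns to its start, so $d\mid\ell$.
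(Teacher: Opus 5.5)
Your proof is correct and follows essentially the same route as the paper: the level map $x\mapsto c^{-1}x \bmod d$ drives sufficiency, the shift relation $\phi(x+s)=\phi(x)+1$ iterated along a fixed color gives necessity of (a), and summing the colors around the image cycle gives (b). Your $H=\langle S-S\rangle$ index argument is only a repackaging of the paper's step writing $s\equiv qs_0$ and deducing $q\equiv1\pmod d$. One small correction: anti-reflexivity is not why $\diC_d$ is a core; the right reason, which the paper uses, is that any endomorphism of $\diC_d$ must raise the index by $1$ along every arc and hence is a rotation.
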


\begin{proof}
First suppose that conditions~\textup{(a)} and~\textup{(b)} hold. Let
\[
v_0=0 \ \text{ and }\ v_k=\sum_{i=1}^k s_i\in \mathbb{Z}_n
\ \text{ for } k \in \{1,2,\ldots, d\}.
\]
Then $v_d=v_0$, and $v_k\to v_{k+1}$ is an arc of $G$ for every $k\in\{0,1,\ldots,d-1\}$.
Since every $s\in S$ is a unit modulo $n$, we have $\gcd(s,n)=1$. Together with $s\equiv c\pmod d$ and $d\mid n$, this implies $\gcd(c,d)=1$.
Thus $c$ is invertible modulo $d$. Let $c^{-1}$ denotes the inverse of $c$ in $\mathbb{Z}_d^*$. Define
\[
f:\mathbb{Z}_n\to\mathbb{Z}_n,
\ \
f(x)=v_{c^{-1}x \bmod d}.
\]
If $x\to x+s$ is an arc of $G$, then $c^{-1}(x+s)\equiv c^{-1}x+c^{-1}s\equiv c^{-1}x+1\pmod d$, and so $f(x)\to f(x+s)$ is one of the arcs $v_k\to v_{k+1}$. Therefore, $f\in\End(G)$.
We now show that the induced sub-digraph $G[\im(f)]$ is precisely $\diC_d$. For each $k\in\{0,1,\ldots,d-1\}$, condition~\textup{(a)} gives
\[
c^{-1}v_k=c^{-1}\sum_{i=1}^k s_i\equiv c^{-1}kc\equiv k\pmod d.
\]
Hence $f(v_k)=v_k$. In particular, $v_0,v_1,\dots,v_{d-1}$ are pairwise distinct. Indeed, if $v_i=v_j$ with $0\leq i,j\leq d-1$, then the preceding congruence gives $i\equiv c^{-1}v_i=c^{-1}v_j\equiv j\pmod d$, which forces $i=j$. Moreover, if $v_i\to v_j$ is an arc of $G$, then $v_j-v_i\equiv c\pmod d$, so $j-i\equiv c^{-1}(v_j-v_i)\equiv1\pmod d$.
Thus $G[\im(f)]=\diC_d$.

Since $f(v_k)=v_k$ for each $k\in\{0,1,\ldots,d-1\}$, the map $f$ is a retraction from $G$ onto $G[\im(f)]$. Note that every endomorphism of a directed cycle is an automorphism, and so $\diC_d$ is a core. Therefore, the core of $G$ is $\diC_d$.
Let $u_0\to \ldots\to u_{h-1}\to u_h=u_0$ be a directed cycle of length $h$ in $G$. For each $i\in\{0,1,\ldots,h-1\}$, condition~\textup{(a)} gives $u_{i+1}-u_i\equiv c\pmod d$. Summing around the cycle yields
\[
0\equiv u_h-u_0\equiv\sum_{i=0}^{h-1}(u_{i+1}-u_i)\equiv hc\pmod d.
\]
Since $\gcd(c,d)=1$, it follows that $h\equiv0\pmod d$, whence the length of every directed cycle in $G$ is divisible by $d$.

Conversely, suppose that $f\in\End(G)$ and that $G[\im(f)]$ is $\diC_d$. Label the vertices of $\diC_d$ as $v_0,\dots,v_{d-1}$ so that its arcs are $v_i\to v_{i+1}$, with indices taken modulo $d$. For each $x\in\mathbb{Z}_n$, let $k_x\in\mathbb{Z}_d$ be the unique index such that $f(x)=v_{k_x}$.
Since $f$ is an endomorphism, the arc $x\to x+s$ is mapped to the arc $v_{k_x}\to v_{k_{x+s}}$ in $G[\im(f)]$ for every $x\in\mathbb{Z}_n$ and $s\in S$. As $G[\im(f)]$ is a directed cycle, it follows that
\begin{equation}\label{eq:index-shift}
k_{x+s}-k_x\equiv1\pmod d.
\end{equation}
Fix $s_0\in S$. Iterating~\eqref{eq:index-shift} $n$ times along arcs of color $s_0$ yields $k_{x+ns_0}-k_x\equiv n\pmod d$. Since $ns_0=0$ in $\mathbb{Z}_n$, this implies that $d\mid n$.

Let $b\in\mathbb{Z}_n^*$ satisfy $bs_0\equiv1\pmod n$. For each $s\in S$, choose $q\in\{0,1,\ldots,n-1\}$ such that $q\equiv bs\pmod n$. Then $qs_0\equiv s\pmod n$. Iterating~\eqref{eq:index-shift} $q$ times along arcs of color $s_0$ and comparing the result with one application along an arc of color $s$ gives
\[
q\equiv k_{x+qs_0}-k_x=k_{x+s}-k_x\equiv1\pmod d.
\]
Since $d\mid n$, reducing $qs_0\equiv s\pmod n$ modulo $d$ and using $q\equiv1\pmod d$ gives $s\equiv s_0\pmod d$ for every $s\in S$. Thus condition~\textup{(a)} holds.
Finally, for each arc $v_{i-1}\to v_i$ of $G[\im(f)]$, choose $s_i\in S$ such that $v_i-v_{i-1}=s_i$ in $\mathbb{Z}_n$. Summing these equalities around the cycle yields condition~\textup{(b)}.
\end{proof}

We now return to $0$-unit $3$-valent circulant digraphs, for which we need the following lemma.

\begin{lemma}[{\cite[Lemma~6.7]{ABDKM2018}}]\label{lem:constant-endomorphism}
Let $n\ge3$ be an integer, and let $G = \Cay(\mathbb{Z}_n,S)$, where $S=\{0,s,t\}$ with $s,t\in \mathbb{Z}_n^*$ and $t \neq \pm s$. Then $f \in \End(G)$ is constant if and only if there exist $y\in \mathbb{Z}_n$ and $u\in \{s,t\}$ such that $f(y+u)=f(y)$.
\end{lemma}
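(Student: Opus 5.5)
This is Lemma~6.7 of~\cite{ABDKM2018}, cited without proof. I would prove it by propagating the coincidence $f(y+u)=f(y)$ through the arcs of colours $s$ and $t$.

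The forward direction is trivial. If $f$ is constant, then for any $y$ and $u=s$ we have $f(y+s)=f(y)$.

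For the converse, suppose $f(y+u)=f(y)$, say with $u=s$; the case $u=t$ is symmetric. Write $a=f(y)$. The idea is that a reflexive vertex can absorb arcs, and I will use the absence of $2$-cycles together with $t\neq\pm s$. Set $w=y+t$. There are arcs $y\to w$ and $y+s\to y+s+t=w+s$. Also $w\to w+s$ is an arc, with colour $s$.

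The first step is to pin down the images of the neighbours. Since $f(y)=a$ and $y\to w$ is an arc, we get $f(w)\in\{a,a+s,a+t\}$. Since $f(y+s)=a$ and $y+s\to w+s$ is an arc, we get $f(w+s)\in\{a,a+s,a+t\}$. Since $w\to w+s$ is an arc, $f(w+s)-f(w)\in\{0,s,t\}$. This alone does not force equality. A cleaner route uses in-arcs as well. The vertex $y+s$ has in-neighbours $y$ (colour $s$) and $y+s-t$ (colour $t$). The vertex $y$ has in-neighbours $y-s$ and $y-t$. Each of these must map to a vertex of $\{a,a-s,a-t\}$.

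My plan is to show that $f(y+t)=f(y)$ or $f(y+s+s)=f(y+s)$, that is, to produce a new coincidence along an arc at an adjacent vertex. I would then show that the set $Z$ of vertices $x$ with $f(x+s)=f(x)$ or $f(x+t)=f(x)$ is closed under adding $s$. Since $s$ is a unit, iterating gives $Z=\mathbb{Z}_n$. A second short argument then upgrades this to: every arc is collapsed. The key tool for this upgrade is the following observation. Suppose $f(x)=f(x+s)$ but $f(x+t)\neq f(x)$ and $f(x+s+t)\neq f(x+s+s)$, and so on. Then the differences $f(\cdot)$ along a commuting square $x\to x+s\to x+s+t$ and $x\to x+t\to x+t+s$ must agree. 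Writing each difference in $\{0,s,t\}$, this gives equations of the form $\alpha+\beta=\gamma+\delta$ with $\alpha,\beta,\gamma,\delta\in\{0,s,t\}$ and $\alpha=0$. The hypotheses $t\neq\pm s$, together with $2s\neq t$ or $2t\neq s$ type conditions, restrict the solutions. This case analysis is the main obstacle. Relations such as $2s=t$ or $s+t=0$-type coincidences modulo $n$ can occur, and $t\neq\pm s$ only excludes some of them. I would handle the exceptional cases by using the square in both directions, forward and backward, to obtain a contradiction or a further coincidence.

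Once every arc $x\to x+s$ satisfies $f(x+s)=f(x)$, the conclusion is immediate: $s$ generates $\mathbb{Z}_n$, so $f$ is constant. Equivalently, once one colour class is entirely collapsed, $f$ is constant along the Hamiltonian $s$-cycle.
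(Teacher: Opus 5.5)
Your forward direction is fine. So is your closing remark: if every $s$-arc is collapsed then $f$ is constant, since $s$ generates $\mathbb{Z}_n$. The gap is the converse, which is the whole content of the lemma and is only announced. Three things are missing:
\begin{enumerate}
\item the closure of $Z$ under $x\mapsto x+s$ is never argued;
\item the ``upgrade'' from $Z=\mathbb{Z}_n$ to every arc being collapsed is never argued;
\item the exceptional relations are explicitly deferred.
\end{enumerate}
Moreover, the square mechanism you describe does not give the closure you want.

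Unless $t=2s$ or $s=2t$, the sub-digraph induced on $a+S$ has no arc between $a+s$ and $a+t$. So from $f(x)=f(x+s)=a$, your own data ($f(x+t),f(x+s+t)\in a+S$ and $f(x+t)\to f(x+s+t)$) give exactly one dichotomy: $f(x+t)=a$ or $f(x+t+s)=f(x+t)$. This propagates a collapse in the $t$-direction, not the $s$-direction. Iterating it, you either collapse every $s$-arc (and are done) or reach a vertex $z$ with $f(z)=f(z+s)=f(z+t)$. At such a $z$ the square at $z$ is vacuous, because $f(z+s)=f(z+t)$ makes its two sides agree automatically. So your mechanism produces nothing new there, and nothing in the proposal decides whether $z+s\in Z$. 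When $t=2s$ or $s=2t$, even the dichotomy fails, since $0+t=s+s$ (resp.\ $0+s=t+t$) solves the square equation.

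What is missing is a global ingredient; here is one way to supply it.
\begin{itemize}
\item \textbf{Case $\{s,t\}$ a unit multiple of $\{1,2\}$ (so $n$ is odd).} Normalize to $S=\{0,1,2\}$ and set $d_x=f(x+1)-f(x)\in\{0,1,2\}$. The $2$-arcs force $d_x+d_{x+1}\le 2$ as integers, so $0\le\sum_x d_x\le n$ with $\sum_x d_x\equiv 0\pmod n$. Hence either all $d_x=0$, or all $d_x+d_{x+1}=2$, which for odd $n$ forces $d_x=1$ throughout (a translation, with no collapse).
\item \textbf{Case $t=s+n/2$.} Normalize $s=1$ and reduce $f$ modulo $m=n/2$. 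The increments $e_x\in\{0,1\}$ satisfy $\sum_x e_x\in\{0,m,2m\}$. Every window of $m+1$ consecutive $e_x$ sums into $\{0,1,m,m+1\}$, and this rules out $\sum_x e_x=m$.
\item \textbf{Remaining case.} Here every square identity is an identity of multisets, so $f$ lifts to a monotone map of the reflexive grid $\mathbb{Z}^2$. Its additive period map $\lambda$ on $L=\{(i,j):is+jt=0\}$ is given by a nonnegative matrix with column sums $1-|A_s|/n$ and $1-|A_t|/n$. The collapse indicators also satisfy the square identity, so one collapse forces both $A_s$ and $A_t$ to be nonempty. Then $\lambda^k\to 0$, so some power of $f$ is constant. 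For the least such $k$, put $g=f^{k-1}$: it is non-constant with $g^2$ constant. Hence $g$ collapses a closed walk of $\Cay(\mathbb{Z}_n,\{s,t\})$ lying in $\im g$. This forces a zero column of the matrix of $g$, that is, a fully collapsed Hamiltonian colour class, contradicting that $g$ is non-constant.
\end{itemize}
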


We are now ready to conclude the section with the following corollary of Theorem~\ref{thm:cycle-image-characterization}.

\begin{corollary}\label{cor:main-conditions}
Let $n\ge3$ be an integer, and let $G=\Cay(\mathbb{Z}_n,S)$, where $S=\{0,s,t\}$ with $s,t\in\mathbb{Z}_n^*$ and $t\neq\pm s$.
Then $G$ has a non-trivial endomorphism if and only if there exists a divisor $d\ge3$ of $n$ satisfying the following conditions:
\begin{enumerate}
\item[\textup{(a)}] $s \equiv t \pmod d$.
\item[\textup{(b)}] The equation $xs+(d-x)t \equiv 0 \pmod n$ has a solution in $\{1,2,\ldots,d-1\}$.
\end{enumerate}
Moreover, if these conditions hold, then $d=\gcd(n,s-t)$. The core of $\Cay(\mathbb{Z}_n,\{s,t\})$ is $\diC_d$, and the length of every directed cycle (whether induced or not) in $\Cay(\mathbb{Z}_n,\{s,t\})$ is divisible by $d$.
\end{corollary}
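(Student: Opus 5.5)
The proof will pass to the anti-reflexive digraph $G'=\Cay(\mathbb{Z}_n,\{s,t\})$ and then apply Theorem~\ref{thm:cycle-image-characterization} with $S'=\{s,t\}$. Its hypotheses hold: $S'\subseteq\mathbb{Z}_n^*$, and $S'\cap(-S')=\varnothing$ because $t\neq\pm s$, and $s\neq -s$ (if $2s=0$ with $s$ a unit then $n\le 2$).

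\emph{Forward direction.} Let $f\in\End(G)$ be non-trivial. By Lemma~\ref{lem:constant-endomorphism}, since $f$ is not constant, $f(y+u)\neq f(y)$ for all $y$ and $u\in\{s,t\}$. Hence every arc of color $s$ or $t$ maps to a non-loop arc, which has color $s$ or $t$. So $f\in\End(G')$, and $f$ is not an automorphism of $G'$: the map $f$ is not bijective, since otherwise it would be an automorphism of the finite digraph $G$ (an injective arc-preserving map of a finite digraph is an automorphism). Therefore $G'$ is not a core. The discussion at the start of this section then shows that the core of $G'$ is $\diC_d$ for some $d\ge3$. Composing a retraction onto the core gives an endomorphism $r$ of $G'$ with $G'[\im r]\cong\diC_d$. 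Here I would check that the core, being a retract, is an induced sub-digraph; indeed a retract $H$ is induced, because any arc between vertices of $H$ is mapped by $r$ to itself. Theorem~\ref{thm:cycle-image-characterization} then yields $d\mid n$ and $s\equiv t\pmod d$, which is (a). It also yields $s_1,\dots,s_d\in\{s,t\}$ summing to $0$. If $x$ is the number of $s$'s, this gives $xs+(d-x)t\equiv0\pmod n$. The cases $x=0$ and $x=d$ give $ds\equiv0$ or $dt\equiv 0\pmod n$, impossible for units since $d<n$ ($d=n$ would make $\diC_n$ all of $G'$, which has out-valency $2$). So $x\in\{1,\dots,d-1\}$, which is (b).

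\emph{Backward direction.} Given $d$ satisfying (a) and (b), the same choice of summands verifies the conditions of Theorem~\ref{thm:cycle-image-characterization}. This produces a retraction $f$ of $G'$ onto an induced $\diC_d$. Since $f$ preserves the arcs of $G'$ and loops are present in $G$, $f\in\End(G)$. It is non-constant because $d\ge3$. It is not bijective: $d<n$, since $d=n$ would force $xs+(n-x)t\equiv x(s-t)\equiv0$ with $s\equiv t\pmod n$, contradicting $t\ne s$.

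\emph{The ``moreover'' part.} The statements about the core and cycle lengths come directly from Theorem~\ref{thm:cycle-image-characterization}. The main point is $d=\gcd(n,s-t)$. By (a), $d\mid e:=\gcd(n,s-t)$. Conversely, $e\mid n$ and $s\equiv t\pmod e$, and the directed $d$-cycle from (b) gives a cycle of length $d$ in $G'$. I would argue as follows: reducing $xs+(d-x)t\equiv0$ modulo $e$ gives $ds\equiv 0\pmod e$, and $s$ is a unit mod $e$, so $e\mid d$. Hence $d=e$. This last step is the one I expect to need the most care, but the modular reduction seems clean.
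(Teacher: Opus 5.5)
Your proposal is correct and follows essentially the same route as the paper: pass to $\Cay(\mathbb{Z}_n,\{s,t\})$ via Lemma~\ref{lem:constant-endomorphism}, identify the core as an induced $\diC_d$ with $3\le d<n$, and apply Theorem~\ref{thm:cycle-image-characterization} in both directions, excluding $x\in\{0,d\}$ because $s,t$ are units. The only divergence is the final identification $d=\gcd(n,s-t)$: you reduce $xs+(d-x)t\equiv0$ modulo $e=\gcd(n,s-t)$ to get $e\mid ds$ and hence $e\mid d$, whereas the paper writes $n=dh$, $s-t=dq$ and shows $\gcd(h,q)=1$; both are valid, and yours is slightly more direct.
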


\begin{proof}
Set $S'=S\setminus\{0\}=\{s,t\}$ and $G'=\Cay(\mathbb{Z}_n,S')$. Then $S'\subseteq\mathbb{Z}_n^*$ and $S'\cap(-S')=\varnothing$. $G'$ is obtained from $G$ by removing the loop at every vertex, and hence $\End(G')\subseteq\End(G)$ and $\Aut(G')=\Aut(G)$. Conversely, if $f\in\End(G)$ is non-constant, then Lemma~\ref{lem:constant-endomorphism} implies that $f\in\End(G')$.

Suppose first that $G$ has a non-trivial endomorphism. The preceding observation gives a non-trivial endomorphism of $G'$, so Lemma~\ref{lem:core-proper-subdigraph} implies that $G'$ is not a core. Let $C$ be the core of $G'$. Then $C$ is a proper retract of $G'$.
Since $s\in\mathbb{Z}_n^*$, the arcs of color $s$ form a directed Hamiltonian cycle of $G'$, and hence $G'$ is strongly connected. As $C$ is a retract of $G'$, it is a homomorphic image of $G'$ and is therefore strongly connected. By Theorem~\ref{thm:core-vertex-transitive}, $C$ is vertex-transitive. Moreover, $C$ is anti-reflexive and has at least two vertices.
Every vertex of $C$ has the same out-valency, which is either $1$ or $2$, since $C$ is a sub-digraph of the $2$-valent digraph $G'$. If this out-valency were $2$, then no arc of $G'$ would go from $V(C)$ to $V(G')\setminus V(C)$.
The strong connectivity of $G'$ would then force $C=G'$, a contradiction. Since $C$ is proper, its out-valency is therefore $1$. As $C$ is finite and strongly connected, it follows that $C\cong\diC_d$ for some $d$. Since $G'$ is anti-reflexive and contains no $2$-cycles, we have $3\leq d<n$.

Because $C$ is a retract of $G'$, there exists a retraction $r:G'\to C$. Furthermore, $C$ is induced on its vertex set: indeed, if $x,y\in V(C)$ and $x\to y$ is an arc of $G'$, then $r(x)=x$ and $r(y)=y$, so the homomorphism property of $r$ implies that $x\to y$ is an arc of $C$. Hence $G'[\im(r)]=C=\diC_d$.
Theorem~\ref{thm:cycle-image-characterization} now applies. Its first condition gives $d\mid n$ and $s\equiv t\pmod d$. Its second condition provides an $x\in\{0,1,\ldots,d\}$ such that $xs+(d-x)t\equiv0\pmod n$.
As $s,t\in\mathbb{Z}_n^*$, we have $x\neq0$ and $x\neq d$, and thus $x\in\{1,2,\ldots,d-1\}$, which proves necessity.

Conversely, suppose that a divisor $d\geq3$ of $n$ satisfies conditions~\textup{(a)} and~\textup{(b)}. Condition~\textup{(a)} gives $s\equiv t\pmod d$; since $s\neq t$ in $\mathbb{Z}_n$, we have $d<n$.
Condition~\textup{(b)} provides $d$ elements of $\{s,t\}$ whose sum is congruent to $0$ modulo $n$.
Theorem~\ref{thm:cycle-image-characterization} therefore yields an endomorphism $r\in\End(G')$ such that $G'[\im(r)]=\diC_d$. The map $r$ is also an endomorphism of $G$. Since $3\leq d<n$, it is non-trivial.
Furthermore, Theorem~\ref{thm:cycle-image-characterization} implies that the core of $G'$ is $\diC_d$, and that the length of every directed cycle (whether induced or not) in $G'$ is divisible by $d$. To determine $d$, write $n=dh$ and $s-t=dq$. Condition~\textup{(b)} gives $xq+t\equiv0\pmod h$, so $\gcd(q,h)\mid t$. Since $h\mid n$ and $\gcd(t,n)=1$, we obtain $\gcd(q,h)=1$. Thus $\gcd(n,s-t)=d\gcd(h,q)=d$, proving uniqueness.
\end{proof}

\section{Proof of Theorem~\ref{thm:grand-classification}}\label{sec:proof-of-main}

In this section, we first establish a criterion for $\End(\Cay(\mathbb{Z}_n,S))=\End(\Cay(\mathbb{Z}_n,kS))$, and then apply it to prove Theorem~\ref{thm:grand-classification}.

\begin{lemma}\label{lem:end-monoid-criterion}
Let $S\subseteq \mathbb{Z}_n$ and $k\in \mathbb{Z}_n^*$. Then $\End(\Cay(\mathbb{Z}_n,S))=\End(\Cay(\mathbb{Z}_n,kS))$ if and only if $g(y+kS)\subseteq g(y)+kS$ for every $g\in \End(\Cay(\mathbb{Z}_n,S))$ and every $y\in \mathbb{Z}_n$.
\end{lemma}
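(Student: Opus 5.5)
The plan is to unwind the definition of an endomorphism of a Cayley digraph and observe that the two monoids have the same cardinality. For $T\subseteq\mathbb{Z}_n$, a map $g:\mathbb{Z}_n\to\mathbb{Z}_n$ lies in $\End(\Cay(\mathbb{Z}_n,T))$ exactly when, for every $y\in\mathbb{Z}_n$ and $u\in T$, the arc $y\to y+u$ is sent to an arc. That is, $g(y+u)-g(y)\in T$, or equivalently $g(y+T)\subseteq g(y)+T$ for all $y$.

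Applying this with $T=kS$ shows that the stated condition says precisely that every $g\in\End(\Cay(\mathbb{Z}_n,S))$ is also an endomorphism of $\Cay(\mathbb{Z}_n,kS)$. In other words, the condition is equivalent to the inclusion
\[
\End(\Cay(\mathbb{Z}_n,S))\subseteq\End(\Cay(\mathbb{Z}_n,kS)).
\]
The forward direction of the lemma is therefore immediate: if the two monoids are equal, this inclusion holds, and so the condition holds.

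For the converse, we must upgrade the inclusion to an equality. Here we use the isomorphism recorded in Section~\ref{sec:preliminaries}: conjugation $f\mapsto kfk^{-1}$ is a bijection from $\End(\Cay(\mathbb{Z}_n,S))$ onto $\End(\Cay(\mathbb{Z}_n,kS))$. Its inverse is conjugation by $k^{-1}$, and one can check directly that it sends arcs to arcs, since $x\to x+ku$ maps to $k^{-1}x\to k^{-1}x+u$.

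Because these are finite sets of self-maps of $\mathbb{Z}_n$, the bijection gives them equal cardinality. A subset of a finite set with the same cardinality must be the whole set, so the inclusion is an equality.

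No step is a serious obstacle. The only point that needs care is the converse, where the inclusion alone would not suffice without the cardinality comparison supplied by the conjugation bijection.
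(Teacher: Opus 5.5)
Your proposal is correct and follows essentially the same route as the paper. You reduce the condition to the inclusion $\End(\Cay(\mathbb{Z}_n,S))\subseteq\End(\Cay(\mathbb{Z}_n,kS))$ and then upgrade it to equality by a cardinality count coming from the isomorphism $y\mapsto ky$, which you phrase as the conjugation bijection $f\mapsto kfk^{-1}$.
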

\begin{proof}
The necessity is immediate. We prove sufficiency. Let $g\in \End(\Cay(\mathbb{Z}_n,S))$, and let $(y,z)$ be an arbitrary arc of $\Cay(\mathbb{Z}_n,kS)$. Then $z\in y+kS$.
By hypothesis, $g(z)\in g(y)+kS$, and therefore $(g(y),g(z))$ is an arc of $\Cay(\mathbb{Z}_n,kS)$. Hence $g\in \End(\Cay(\mathbb{Z}_n,kS))$, and so $\End(\Cay(\mathbb{Z}_n,S))\subseteq \End(\Cay(\mathbb{Z}_n,kS))$.
Since $k\in \mathbb{Z}_n^*$, the map $y\mapsto ky$ induces an isomorphism $\Cay(\mathbb{Z}_n,S)\cong \Cay(\mathbb{Z}_n,kS)$. Therefore, their endomorphism monoids have the same cardinality, and the inclusion above forces $\End(\Cay(\mathbb{Z}_n,S))=\End(\Cay(\mathbb{Z}_n,kS))$.
\end{proof}

We are now ready to prove Theorem~\ref{thm:grand-classification}.

\begin{proof}[\textbf{Proof of Theorem~\ref{thm:grand-classification}}]
Set $G=\Cay(\mathbb{Z}_n,S)$ and $d=\gcd(n,s-t)$. Since $s\neq t$, $d$ is a proper divisor of $n$, so $2d\leq n$. Moreover, $2(s-t)\equiv0\pmod n$ if and only if $2d=n$.
Throughout this proof, whenever an element of $\mathbb{Z}_n$ is used as an integer, we take its representative in $\{0,1,\ldots,n-1\}$. Let $k\in\mathbb{Z}_n^*$. By equation~\eqref{eq:key-expression}, $k\in U_S(\mathbb{Z}_n)$ if and only if $\End(\Cay(\mathbb{Z}_n,S))=\End(\Cay(\mathbb{Z}_n,kS))$.
Let $S'=\{s,t\}$ and $kS'=\{ks,kt\}$.
By Lemma~\ref{lem:constant-endomorphism}, every non-constant endomorphism of $\Cay(\mathbb{Z}_n,S)$ (resp.~$\Cay(\mathbb{Z}_n,kS)$) is also an endomorphism of the corresponding anti-reflexive sub-digraph $\Cay(\mathbb{Z}_n,S')$  (resp.~$\Cay(\mathbb{Z}_n,kS')$).
Therefore, we have $\End(\Cay(\mathbb{Z}_n,S))=\End(\Cay(\mathbb{Z}_n,kS))$ if and only if $\End(\Cay(\mathbb{Z}_n,S'))= \End(\Cay(\mathbb{Z}_n,kS'))$.

We first record that every $k\in\mathbb{Z}_n^*$ satisfying $kS=S$ or $kS=-S$ belongs to $U_S(\mathbb{Z}_n)$.
Indeed, if $kS=S$, then $\End(\Cay(\mathbb{Z}_n,kS))=\End(\Cay(\mathbb{Z}_n,S))$. If $kS=-S$, then $\Cay(\mathbb{Z}_n,kS)=\Cay(\mathbb{Z}_n,-S)=G^{\mathrm{op}}$, where $G^{\mathrm{op}}$ is obtained from $G$ by reversing every arc. A map is an endomorphism of a digraph if and only if it is an endomorphism of its opposite digraph, so $\End(G^{\mathrm{op}})=\End(G)$. Thus in either case $k\in U_S(\mathbb{Z}_n)$.

Suppose first that the equation $xs+(d-x)t\equiv0\pmod n$ has no integer solution $1\leq x\leq d-1$. Then Corollary~\ref{cor:main-conditions} implies that $\End(G)$ consists solely of automorphisms and constant maps. For $k\in\mathbb{Z}_n^*$, we have $k \End(G)k^{-1} = \End(G)$ if and only if $k \Aut(G)k^{-1} = \Aut(G)$.
By~\cite[Proposition~6.3]{ABDKM2018}, $k \Aut(G)k^{-1}=\Aut(G)$ for every $k\in\mathbb{Z}_n^*$, and therefore $U_S(\mathbb{Z}_n)=\mathbb{Z}_n^*$. This proves Item~\textup{(b)} in this case.

For the remainder of the proof, suppose that the equation has an integer solution $1\leq x\leq d-1$. This excludes $d=1$. If $d=2$, then $x=1$ and $s+t\equiv0\pmod n$, contrary to $t\neq-s$. Hence $3\leq d<n$.

We next consider the case where $2d=n$. Then $2(s-t)\equiv0\pmod n$ and $s-t\not\equiv0\pmod n$, so $s-t\equiv d\pmod n$.
For $k\in\mathbb{Z}_n^*$, $k$ is odd, so $n=2d$ gives $kd\equiv d\pmod n$. Thus $kS'=\{k(t+d),kt\}=\{kt,kt+d\}$.
In $\Cay(\mathbb{Z}_n,S')$, the endpoint of a walk of length $k$ starting at a vertex $y$ with $m$ arcs of color $s$ and $k-m$ arcs of color $t$ is $y+ms+(k-m)t\equiv y+kt+m(s-t)\equiv y+kt+md\pmod n$, where $0\leq m\leq k$.
Since $n=2d$, the term $md\pmod n$ is $0$ if $m$ is even and $d$ if $m$ is odd. Hence the set of endpoints of walks of length $k$ starting at $y$ is precisely $\{y+kt,y+kt+d\}=y+kS'$.
Let $g\in \End(\Cay(\mathbb{Z}_n,S'))$.
Let $z\in y+kS'$. By the preceding observation, there is a walk of length $k$ from $y$ to $z$. Applying $g$ to this walk gives a walk of length $k$ from $g(y)$ to $g(z)$. Hence $g(z)\in g(y)+kS'$. Thus $g(y+kS')\subseteq g(y)+kS'$. By Lemma~\ref{lem:end-monoid-criterion}, we obtain $\End(\Cay(\mathbb{Z}_n,S'))=\End(\Cay(\mathbb{Z}_n,kS'))$. Therefore, $U_S(\mathbb{Z}_n)=\mathbb{Z}_n^*$, proving Item~\textup{(b)} in this case.

For the remainder of the proof, suppose that $2d<n$, or equivalently, $2(s-t)\not\equiv0\pmod n$.
Let $k\in\mathbb{Z}_n^*$, and let $c\in\{1,2,\ldots,d-1\}$ be the unique integer satisfying $c\equiv k\pmod d$. Such a $c$ exists because $d\mid n$ and $\gcd(k,n)=1$ imply $\gcd(k,d)=1$.

We first deduce a necessary condition for $k\in U_S(\mathbb{Z}_n)$. Assume that $\End(\Cay(\mathbb{Z}_n,S')) = \End(\Cay(\mathbb{Z}_n,kS'))$. For any solution $x\in\{1,2,\ldots,d-1\}$ of $xs+(d-x)t\equiv0\pmod n$, choose any ordering of $x$ arcs of color $s$ and $d-x$ arcs of color $t$. This determines a closed walk $W$ of length $d$ in $\Cay(\mathbb{Z}_n,S')$, starting and ending at $0$. Write its vertex sequence as $w_0,w_1,\ldots,w_d, \  w_0=w_d=0$.
Since $s\in\mathbb{Z}_n^*$ and $d\mid n$, the image of $s$ is a unit modulo $d$; in the definition below, $s^{-1}$ denotes its inverse in $\mathbb{Z}_d^*$. Define
\[
f_W:\mathbb{Z}_n\to\mathbb{Z}_n,
\ f_W(y)=w_{ys^{-1}\bmod d}.
\]
For $y\in\mathbb{Z}_n$, let $i\in\{0,1,\ldots,d-1\}$ satisfy $i\equiv ys^{-1}\pmod d$. Every $u\in S'$ satisfies $u\equiv s\pmod d$, so $f_W(y+u)-f_W(y)=w_{i+1}-w_i\in S'$, where $w_d=w_0$. Thus $f_W\in\End(\Cay(\mathbb{Z}_n,S')) $ and $ f_W(0)=0$. By the assumed equality of the endomorphism monoids, $f_W\in\End(\Cay(\mathbb{Z}_n,kS'))$. Since $f_W(0)=0$, $f_W(kS')\subseteq kS'$. Moreover, $ts^{-1}\equiv1\pmod d$ and $k\equiv c\pmod d$, so $f_W(ks)=f_W(kt)=w_c$. Hence every such closed walk satisfies
\begin{equation}\label{eq:wc-in-ks}
w_c\in kS'.
\end{equation}

We now consider the possible values of $c$, where $1\leq c\leq d-1$.
If $c=1$, choose two such closed walks whose first arcs have colors $s$ and $t$, respectively. This is possible because every solution satisfies $1\leq x\leq d-1$, so both colors occur. By \eqref{eq:wc-in-ks}, $S'\subseteq kS'$. Since both sets have two elements, $kS'=S'$, and hence $kS=S$.
Similarly, if $c=d-1$, choose closed walks whose last arcs have colors $s$ and $t$, respectively. Their $(d-1)$st vertices are $-s$ and $-t$, so \eqref{eq:wc-in-ks} gives $-S'\subseteq kS'$. Again both sets have two elements, so $kS'=-S'$ and hence $kS=-S$.
Thus, if $c=1$ or $c=d-1$, then $kS=S$ or $kS=-S$, respectively. It remains to rule out $2\leq c\leq d-2$.

Suppose that the equation $xs+(d-x)t\equiv0\pmod n$ has a solution $x\in\{2,3,\ldots,d-2\}$.
Fix such a solution $x$. Among the first $c$ steps of a corresponding closed walk, let $m$ be the number of arcs of color $s$. Then $w_c(m) \equiv ms+(c-m)t \equiv ct+m(s-t) \pmod n$. Every integer $m$ in the range $m_{\min}=\max(0,c-d+x) \leq m\leq m_{\max}=\min(c,x)$ is realized by some ordering of the $x$ arcs of color $s$ and $d-x$ arcs of color $t$.
Since $2\leq c\leq d-2$ and $2\leq x\leq d-2$, we have $m_{\max}-m_{\min}=\min(c,x,d-c,d-x)\geq2$. Choose three consecutive values $m_0,m_0+1,m_0+2$. The corresponding closed walks have
\begin{align*}
y_0 &\equiv ct+m_0(s-t) \pmod n,\\
y_1 &\equiv y_0+(s-t) \pmod n,\\
y_2 &\equiv y_0+2(s-t) \pmod n
\end{align*}
as their respective vertices after $c$ steps. By \eqref{eq:wc-in-ks}, all three vertices belong to $kS'$. However, because $s\neq t$ and $2(s-t)\not\equiv0\pmod n$, they are pairwise distinct. This is impossible since $|kS'|=2$.
Consequently, no value of $c$ with $2\leq c\leq d-2$ is possible in this case.

Suppose that the equation $xs+(d-x)t\equiv0\pmod n$ has no solution $x\in\{2,3,\ldots,d-2\}$.
Since it has a solution in $\{1,2,\ldots,d-1\}$, interchanging $s$ and $t$ if necessary, we may assume that $x=1$ is a solution. Set $h=n/d$. Since $d\mid(s-t)$, $h(s-t)\equiv0\pmod n$, so $x=1+h$ is also a solution. The absence of solutions in $\{2,3,\ldots,d-2\}$ gives $1+h\geq d-1$, and hence $h\geq d-2$.
The relation $s+(d-1)t\equiv0\pmod n$ gives $s\equiv(1-d)t\pmod n$. For $2\leq c\leq d-2$, consider the closed walks corresponding to the solution $x=1$. Choosing the unique $s$-arc after the first $c$ steps gives $w_c=ct$, while choosing it among the first $c$ steps gives $w_c=s+(c-1)t$. Therefore, by \eqref{eq:wc-in-ks}, $\{ct,s+(c-1)t\}\subseteq kS'$. The two elements on the left are distinct, and $|kS'|=2$, so $\{ct,s+(c-1)t\}=kS'$. Multiplying by $t^{-1}$ gives $\{c,c-d\}=\{k(1-d),k\}$ in $\mathbb{Z}_n$.
There are two possible matchings. If $k\equiv c\pmod n $ and $ k(1-d)\equiv c-d\pmod n$, then $d(c-1)\equiv0\pmod n$, so $h\mid(c-1)$. If instead $k\equiv c-d\pmod n $ and $ k(1-d)\equiv c\pmod n$, then $d(c-d+1)\equiv0\pmod n$, so $h\mid(d-1-c)$. Both are impossible, since $1\leq c-1\leq d-3<h $ and $ 1\leq d-1-c\leq d-3<h$.
Thus no value of $c$ with $2\leq c\leq d-2$ is possible in this case.

The preceding arguments show that every $k\in U_S(\mathbb{Z}_n)$ satisfies $kS=S$ or $kS=-S$. We now determine these units explicitly.
If $kS=S$, then $\{ks,kt\}=\{s,t\}$. Since $s$ and $t$ are units and $s\neq t$, there are two possibilities. 
In the first case, $ks=s$ and $kt=t$. This gives $k=1$. In the second case, $ks=t$ and $kt=s$. The first equality gives $k=ts^{-1}$. With this value of $k$, the second equality is equivalent to $t^2\equiv s^2\pmod n$.
Similarly, if $kS=-S$, then either $k=-1$, or $k=-ts^{-1}$ and $t^2\equiv s^2\pmod n$. Conversely, if $t^2\equiv s^2\pmod n$, then $(ts^{-1})s=t$ and $(ts^{-1})t=s$, so $ts^{-1}S=S$ and $-ts^{-1}S=-S$. Together with the reverse inclusion established above, we obtain
\[
U_S(\mathbb{Z}_n)=
\begin{cases}
\{\pm1,\pm ts^{-1}\}, & t^2\equiv s^2\pmod n,\\[1mm]
\{\pm1\}, & t^2\not\equiv s^2\pmod n.
\end{cases}
\]
This proves Item~\textup{(a)}.

These cases exhaust all possibilities, and therefore prove Theorem~\ref{thm:grand-classification}.
\end{proof}

\section*{Acknowledgements}
Chenhui Lv was supported by the Outstanding Doctoral Students
Overseas Study Program of the University of Science and Technology
of China.
This work was completed during the author's visit to the University
of Melbourne, whose support and hospitality he gratefully
acknowledges. The author is grateful to Binzhou Xia for bringing this
problem to his attention and for providing patient guidance during
the revision of the introduction.

\medskip

\noindent\textbf{Statement on generative AI use}

During the preparation of this work, the author first identified the
need to establish necessary and sufficient conditions for a unit
circulant digraph to admit an endomorphism whose image induces a
directed cycle. The author then used Google Gemini 3.1 Pro to assist
in discovering these conditions, which led to
Theorem~\ref{thm:cycle-image-characterization} and
Corollary~\ref{cor:main-conditions}. Building on this, for reflexive
$3$-valent circulant digraphs that admit such non-trivial
endomorphisms, the author further used Google Gemini 3.1 Pro when
determining the elements of $U_S(\mathbb{Z}_n)$, culminating in
Theorem~\ref{thm:grand-classification}. The author subsequently used
ChatGPT (GPT-5.6 Sol) to assist in substantially simplifying and
streamlining these proofs. The author carefully checked, revised, and
rewrote all resulting arguments, and takes full responsibility for
the final content.

\end{document}